\newtheorem{thm}{Theorem}[section]
\newtheorem{lem}[thm]{Lemma}
\newtheorem{prop}[thm]{Proposition}
\theoremstyle{remark}
\def\C{{\mathbb C}}
\def\H{{\mathbb H}}
\def\N{{\mathbb N}}
\def\R{{\mathbb R}}
\def\1{\text{\bf {1}}}
\DeclareMathOperator{\IM}{Im}
\begin{document}
\title[Riesz transforms]
{ Dimension free boundedness of \\
Riesz transforms for the Grushin operator}
\author{P. K. Sanjay}
 \author{ S. Thangavelu}
\address{Department of Mathematics\\ Indian Institute
of Science\\Bangalore-560 012. }
\email[Sanjay P. K.]{sanjay@math.iisc.ernet.in}
 \email[S. Thangavelu]{veluma@math.iisc.ernet.in}
 \address{Permanent address(Sanjay P. K.):Department of Mathematics, National Institute of
Technology, Calicut- 673 601.}
\keywords{Riesz Transforms, Grushin operator, Hermite operator, Transference method.}
\subjclass{42C, 42C05  , 43A65.}

\begin{abstract}Let $G = - \Delta_{\xi} - |\xi|^2 \frac{\partial^2}{\partial \eta^2}$ be the Grushin
operator on $\R^n \times \R.$ We prove that the Riesz transforms associated to this operator
are bounded on $L^p (\R^{n+1}), 1 < p < \infty$ and their norms are independent of the dimension
$n$. \end{abstract}
\maketitle
\section{Introduction}
\setcounter{equation}{0}
We  consider the Grushin operator $G = - \Delta_{\xi} - |\xi|^2 \frac{\partial^2}{\partial \eta^2}$ on $\R^n
\times \R$ which can be written formally as
$$Gf(\xi, \eta) = \frac{1}{2\pi} \int_{\R} e^{-i\lambda \eta} H(\lambda)f^{\lambda}(\xi) d\lambda $$ where
$$ H(\lambda) = - \Delta + \lambda^2 |\xi|^2$$ is the scaled Hermite operator on $\R^n$ and
$$ f^{\lambda}(\xi) = \int_{\R} f(\xi, \eta)e^{i\lambda \eta} d \eta $$ is the inverse
Fourier transform of $f$ in the $\eta$ variable.  In the light of this decomposition, we can define
the Riesz transforms associated to the operator $G$ as
  $$ R_jf(\xi,\eta)  = \int_{-\infty}^\infty  e^{-i\lambda \eta}
 R_j(\lambda)f^\lambda(\xi) d\lambda $$ and  $$ R_j^*f(\xi,\eta)  = \int_{-\infty}^\infty  e^{-i\lambda
\eta}
 R_j^*(\lambda)f^\lambda(\xi) d\lambda $$  for $ j=1,2,3,\ldots,n. $ Here, $$ R_j(\lambda) =
A_j(\lambda)H(\lambda)^{-\frac{1}{2}}, R_j^*(\lambda) =
 A_j(\lambda)^*H(\lambda)^{-\frac{1}{2}} $$ are the Riesz transforms associated to the Hermite
operator which has the decomposition $$ H(\lambda) = \frac{1}{2} \sum_{j=1}^{n}
\left(A_j(\lambda)A_j(\lambda)^*+ A_j(\lambda)^*A_j(\lambda)\right)$$
where
$$ A_j(\lambda) = -\frac{\partial}{\partial \xi_j}+\lambda \xi_j ,
 A_j(\lambda)^* = \frac{\partial}{\partial \xi_j}+\lambda \xi_j $$
are the creation and annihilation operators on $\R^n$.  The boundedness of these Riesz
transforms on $L^p(\R^{n+1})$ is known \cites{jotsaroop2011riesz, Baudoin18012012}. In
\cite{Baudoin18012012}, the authors have proved the boundedness of Riesz transforms associated to
a much larger class of smooth locally subelliptic diffusion operators on  smooth connected
non-compact manifold. In \cite{jotsaroop2011riesz} these Riesz transforms were
treated as operator valued Fourier multipliers for $L^p(\R^n)$ valued functions on $\R$ and the
boundedness was proved with the aid of a result of L. Weis \cite{MR1825406} on the operator
valued Fourier multipliers.    The aim of this paper is to prove the following theorem about the
dimension free boundedness of the vector of Riesz transforms $\mathcal{R}f$.  That is, we
consider the operator $\mathcal{R} = (R_1, R_2, \cdots, R_n, R_1^*, R_2^*, \cdots, R_n^*)$ with
$$|\mathcal{R}f(\xi,\eta)|= \left(\sum^n_{j=1}|R_j f(\xi,\eta)|^2+
\sum^n_{j=1}|R_j^* f(\xi,\eta)|^2\right)^{1/2}$$ and prove:
\begin{thm}\thlabel{main}  For each $1<p<\infty$, there exists a constant $C_p$ independent
of the dimension n, such that for all $f\in L^p(\R^n \times \R),$
$$ \left\| \mathcal{R}f \right\| _{p} \leq C_p
\| f \|_p.$$
\end{thm}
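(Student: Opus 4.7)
The plan is to exploit the representation of $\mathcal{R}$ as an operator-valued Fourier multiplier in the $\eta$-variable with symbol $\lambda \mapsto \mathcal{R}(\lambda)=(R_1(\lambda),\ldots,R_n^*(\lambda))$, and to reduce the dimension-free boundedness on $L^p(\R^{n+1})$ to a dimension-free bound for the vector Hermite Riesz transform on $L^p(\R^n)$ via transference.

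The first step is a scaling identity. With the dilation $D_\lambda g(\xi)=|\lambda|^{n/4}g(|\lambda|^{1/2}\xi)$, a direct computation from $A_j(\lambda)=-\partial_{\xi_j}+\lambda\xi_j$ gives
$$A_j(\lambda)=|\lambda|^{1/2}D_\lambda A_j(\mathrm{sgn}\,\lambda)\, D_\lambda^{-1}, \qquad H(\lambda)=|\lambda|\, D_\lambda H(\mathrm{sgn}\,\lambda)\, D_\lambda^{-1},$$
and hence $\mathcal{R}(\lambda)=D_\lambda\, \mathcal{R}(\mathrm{sgn}\,\lambda)\, D_\lambda^{-1}$. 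Since the $L^p$ distortion of $D_\lambda$ is exactly undone by that of $D_\lambda^{-1}$, this gives the uniform bound
$$\sup_{\lambda\neq 0}\|\mathcal{R}(\lambda)\|_{L^p(\R^n)\to L^p(\R^n;\ell^2)}=\|\mathcal{R}(\pm 1)\|_{L^p\to L^p(\ell^2)}.$$
Combined with Plancherel in $\eta$ and the trivial estimate $\|\mathcal{R}(\pm 1)\|_{2\to 2(\ell^2)}\leq 1$ coming from the creation/annihilation algebra, this settles the $L^2$ case at once.

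The second ingredient is the dimension-free estimate $\|\mathcal{R}(\pm 1)\|_{L^p\to L^p(\ell^2)}\leq C_p$ for the vector Hermite Riesz transform on $L^p(\R^n)$, which I would quote (or reprove) from the existing dimension-free results for Hermite/Ornstein--Uhlenbeck Riesz transforms obtained by Lust-Piquard's method or by a Meyer-style Littlewood--Paley/semigroup argument. Together with the scaling identity of the previous step this yields a pointwise-in-$\lambda$, dimension-free bound on the symbol of the operator-valued Fourier multiplier $\mathcal{R}$.

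The last step, which I expect to be the main obstacle, is to convert this uniform symbol bound into a genuine $L^p(\R^{n+1})$ bound with a constant depending only on $p$. The available tool is the operator-valued Mihlin theorem of Weis, used in \cite{jotsaroop2011riesz} for the plain $L^p$ bound, which reduces matters to the $R$-boundedness of the families $\{\mathcal{R}(\lambda):\lambda\neq 0\}$ and $\{\lambda\partial_\lambda \mathcal{R}(\lambda):\lambda\neq 0\}$ on $L^p(\R^n)$. The crucial point now is to make these $R$-bounds independent of $n$: the scaling identity $\mathcal{R}(\lambda)=D_\lambda \mathcal{R}(\pm 1) D_\lambda^{-1}$ writes both families as conjugates of a \emph{single} dimension-free-bounded operator by the dilation semigroup, so the problem reduces, via the Kahane contraction principle and the dimension-free UMD constant of $L^p$, to controlling the dilation semigroup on its own. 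Ensuring that no step in this operator-valued multiplier machinery secretly reintroduces an $n$-dependence is where the careful bookkeeping of the proof will lie.
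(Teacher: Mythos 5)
Your first two steps are sound: the scaling identity $\mathcal{R}(\lambda)=D_\lambda\,\mathcal{R}(\mathrm{sgn}\,\lambda)\,D_\lambda^{-1}$ is correct, the $L^p$ distortions of $D_\lambda$ and $D_\lambda^{-1}$ do cancel, and the dimension-free bound for the vector of Hermite Riesz transforms at $\lambda=\pm1$ is legitimately quotable from Harboure--de Rosa--Segovia--Torrea or Lust-Piquard. But the third step, which you yourself flag as ``the main obstacle,'' is a genuine gap and not mere bookkeeping. The Weis theorem needs $R$-boundedness of $\{\mathcal{R}(\lambda)\}$ and $\{\lambda\partial_\lambda\mathcal{R}(\lambda)\}$, and a uniformly bounded family on $L^p(\R^n)$, $p\neq 2$, need not be $R$-bounded; in particular, $R$-boundedness is not preserved under conjugation by an operator family, so writing $\mathcal{R}(\lambda)$ as $D_\lambda\,\mathcal{R}(\pm1)\,D_\lambda^{-1}$ does not reduce the problem to a single operator. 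The family of $L^p$-normalized dilations is itself not $R$-bounded for $p\neq2$, so ``controlling the dilation semigroup on its own'' is not a viable endpoint, and the Kahane contraction principle only handles scalar multipliers, not conjugations. Moreover, differentiating the conjugation formula in $\lambda$ brings in the (unbounded) generator $\xi\cdot\nabla_\xi$ of the dilation group, so the family $\{\lambda\partial_\lambda\mathcal{R}(\lambda)\}$ involves commutators such as $[\xi\cdot\nabla_\xi,\mathcal{R}(\pm1)]$ whose dimension-free $L^p$ control is an unaddressed problem of essentially the same difficulty as the theorem itself. This is precisely why \cite{jotsaroop2011riesz}, which does run the operator-valued multiplier argument, obtains boundedness but not dimension-free bounds.

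The paper takes an entirely different route that bypasses multiplier theorems. It truncates the Riesz transforms, represents $R_j^{\epsilon}$ as an integral over the Heisenberg group of operators $T_{\epsilon}^{(z,t)}$ against $\tilde{Z}_jp_1(z,t)$ (a method-of-rotations formula in the spirit of Coulhon--M\"uller--Zienkiewicz), proves that $T_{\epsilon}^{(z,t)}$ is bounded uniformly in $(z,t)$, $\epsilon$ and $n$ by Calder\'on transference from truncated Hilbert transforms along the curves $(rz,r^2t)$ in $\H^n$ (which reduce to the parabola in $\R^2$ and hence to Stein--Wainger), and then closes the vector-valued estimate by duality together with dimension-free moment bounds for derivatives of the Heisenberg heat kernel. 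If you want to salvage your outline, you would need either to prove dimension-free $R$-bounds for the two families directly (for instance by dominating them by a positive, dimension-free-bounded operator), or to abandon the multiplier framework in favor of a rotations/transference argument of the paper's kind.
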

E. M. Stein  introduced the notion of the dimension free boundedness for the vector of Riesz
transforms in \cite{MR699317}.  Stein considered the Euclidean Riesz transforms and proved
the result using the technique of g-functions.  Later this technique was adapted for a
non-commutative situation by Harboure, et al. \cite{MR2047645} to
prove a similar result for the Riesz transforms associated to the Hermite operator.  Alternate
proofs for the Euclidean Riesz transforms were provided using the method of rotations by J.
Duoandikoetxea and J. L. Rubio de Francia \cite{MR780616} and using a transference argument
by  Gilles Pisier \cite{MR960544}.  Similar techniques have been used to prove the dimension
free boundedness of Riesz transforms in other contexts as well.
Coulhon, M{\"u}ller and Zienkiewicz \cite{MR1391221} proved the boundedness for the Riesz
transforms associated to the sub-Laplacian on the Heisenberg group.   In two works
\cite{MR2218202} and \cite{MR2091008} Francoise Lust-Piquard  provided alternate proofs for
the Riesz transforms associated to the Hermite operator and the Riesz transforms associated
to the sub-Laplacian.    Our proof follows a method very similar to that of
\cite{MR1391221},\cite{MR2091008} and \cite{MR2218202}.  The method of rotations applied to
the Euclidean Riesz transforms \cite{MR780616} involves  expressing the Riesz transforms as
the average of certain directional Hilbert transforms and using the boundedness (independent of
the direction) of these directional Hilbert transforms. An expression similar to this in the
Heisenberg group context is obtained in   \cite{MR1391221}.    In this paper we obtain a similar
representation for Riesz transforms associated to Grushin operators .

We also note that \thref{main} in turn implies that the Riesz transforms associated to the
Hermite operator satisfy dimension free bounds.  Since the Riesz transform for the Grushin
operator is an operator valued multiplier for the Fourier transform on $\R$, the corresponding
multiplier operator for the Fourier series is also bounded by the same norm.  This can be proved
using a generalisation of a transference result of  Karel de Leeuw \cite{MR0174937}. The proof is
similar to the proof in \cite{sanjay2011revisiting} of the boundedness of the Riesz transforms on
the reduced Heisenberg group using transference from the Heisenberg group.  We refer to
Theorem 2.1 and Theorem 2.2 of \cite{sanjay2011revisiting} for details.  Then the proof of the
Hermite Riesz transforms follows by looking at functions of the form $F(\xi, \eta) =
f(\xi)e^{ik\eta}$ on $\R^n \times [0, 2\pi).$
\section{Riesz transforms for the Grushin operator}
We first consider the individual Riesz transforms $R_j$ and $R_j^*$ for $j = 1, 2, \cdots , n$  and
show that they satisfy dimension free bounds on $L^p(\R^{n+1})$ for $ 1 < p < \infty.$  In order to
do this we introduce the operators $R_j^{\epsilon}$ and $R_j^{*\epsilon}$ which we will call the truncated Riesz transforms.  We only give details of  $R_j^{\epsilon}$ as the other one is similar.
Note that the Riesz transform $R_j(\lambda)$ associated to the Hermite operator $H(\lambda)$
can be written as $$R_j(\lambda) =A_j(\lambda)H(\lambda)^{-1/2} =
\frac{A_j(\lambda)}{\sqrt{\pi}}\int_{0}^{\infty}e^{-r H(\lambda)}r^{-1/2}dr.$$
Here $e^{-r H(\lambda)}$ is the Hermite semigroup.  For $\epsilon > 0,$ we define the truncated
Riesz transforms $R_j^{\epsilon}(\lambda)$ by $$R_j^{\epsilon}(\lambda) =
\frac{A_j(\lambda)}{\sqrt{\pi}}\int_{\epsilon^2}^{1/\epsilon^2}e^{-r H(\lambda)}r^{-1/2}dr.$$  Then the
truncated Riesz transforms $R_j^{\epsilon}$ for the Grushin operator are defined as
$$R_j^{\epsilon}f(\xi,\eta)  = \frac{1}{2 \pi} \int_{-\infty}^{\infty}  e^{-i\lambda \eta}
R_j^{\epsilon}(\lambda)f^\lambda(\xi) d\lambda. $$
We first prove
\begin{prop}\thlabel{trunc}For every $f \in L^2 (\R^{n+1}), R_j^{\epsilon}f \rightarrow R_jf$ in $L^2
(\R^{n+1})$ as $\epsilon \rightarrow 0.$
\end{prop}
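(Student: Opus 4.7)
The plan is to use Plancherel's theorem in the $\eta$ variable to reduce the problem to a statement about the Hermite Riesz transforms $R_j(\lambda)$ acting on $L^2(\R^n)$ for each fixed $\lambda$, and then apply dominated convergence in $\lambda$.

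First, I would apply Plancherel in $\eta$ to the definitions of $R_j f$ and $R_j^\eps f$. Since both are given by operator-valued Fourier multipliers, one obtains an identity of the form
\[
\|R_j^\eps f - R_j f\|_{L^2(\R^{n+1})}^2 = \frac{1}{2\pi}\int_{\R} \bigl\|\bigl(R_j^\eps(\lambda) - R_j(\lambda)\bigr)f^\lambda\bigr\|_{L^2(\R^n)}^2\,d\lambda .
\]
The goal then splits into: (i) a pointwise-in-$\lambda$ strong convergence $R_j^\eps(\lambda)g \to R_j(\lambda)g$ in $L^2(\R^n)$ for every $g\in L^2(\R^n)$ and every $\lambda\neq 0$, and (ii) a uniform bound on $\|R_j^\eps(\lambda) - R_j(\lambda)\|_{L^2\to L^2}$ so as to apply dominated convergence on the integral over $\lambda$.

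For step (i), I would diagonalize $H(\lambda)$ in the basis of scaled Hermite functions $\Phi_\alpha^\lambda$, on which $H(\lambda)\Phi_\alpha^\lambda = (2|\alpha|+n)|\lambda|\Phi_\alpha^\lambda$. For such an eigenfunction,
\[
R_j^\eps(\lambda)\Phi_\alpha^\lambda = \frac{A_j(\lambda)\Phi_\alpha^\lambda}{\sqrt{\pi}}\int_{\eps^2}^{1/\eps^2} e^{-r(2|\alpha|+n)|\lambda|} r^{-1/2}\,dr,
\]
and the multiplier $m_\eps(s) := \frac{1}{\sqrt\pi}\int_{\eps^2}^{1/\eps^2} e^{-rs}r^{-1/2}\,dr$ satisfies $m_\eps(s)\to s^{-1/2}$ pointwise for each $s>0$ and $0\le m_\eps(s)\le s^{-1/2}$. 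Expanding $g=\sum_\alpha c_\alpha \Phi_\alpha^\lambda$ and using the orthogonality of the $A_j(\lambda)\Phi_\alpha^\lambda$ (appropriately normalized), convergence in $L^2$ follows from dominated convergence applied to the sum of coefficients.

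For step (ii), the dominating bound is essentially automatic: the $L^2$ boundedness of $R_j(\lambda)$ with a uniform constant follows from the spectral multiplier $s\mapsto s^{-1/2}$ being bounded on the positive spectrum together with the estimate $\|A_j(\lambda)\Phi_\alpha^\lambda\|_2^2 \le (2|\alpha|+n)|\lambda|$ coming from the decomposition of $H(\lambda)$, and the same estimate for $R_j^\eps(\lambda)$ is obtained by replacing $s^{-1/2}$ with the smaller quantity $m_\eps(s)$. This gives a constant independent of $\eps$ and $\lambda$, so the integrand in the Plancherel identity is dominated by $C\|f^\lambda\|_{L^2(\R^n)}^2$, which is integrable by Plancherel. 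The main subtlety to watch is the behaviour at $\lambda=0$, but this is a measure-zero set and is discarded.

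Once both ingredients are in place, dominated convergence in $\lambda$ finishes the argument. I expect the only mildly technical step to be establishing the uniform $L^2$ operator bound on $R_j^\eps(\lambda)$; once one sees that the truncation corresponds to replacing the spectral multiplier $s^{-1/2}$ by a pointwise smaller bounded function $m_\eps(s)$, everything else is a direct application of Plancherel and dominated convergence.
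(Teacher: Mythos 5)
Your proposal is correct and takes essentially the same route as the paper: Plancherel in $\eta$, diagonalization in the scaled Hermite basis $\Phi_\alpha^\lambda$ using $A_j(\lambda)\Phi_\alpha^\lambda=(2\alpha_j+2)^{1/2}|\lambda|^{1/2}\Phi_{\alpha+e_j}^\lambda$, a uniform ($\eps$- and $\lambda$-independent) operator bound coming from $m_\eps(s)\le s^{-1/2}$, and dominated convergence in $\lambda$ (the paper handles the pointwise-in-$\lambda$ step by density of finite Hermite expansions rather than dominated convergence on the coefficient sum, an immaterial difference). The only slip is the claim $\|A_j(\lambda)\Phi_\alpha^\lambda\|_2^2\le(2|\alpha|+n)|\lambda|$, which fails for $n=1$ since the left side equals $(2\alpha_j+2)|\lambda|$; the correct bound is twice that quantity, which changes nothing in the argument.
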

\begin{proof}  It follows from the definition that
\[\int_{\R} \int_{\R^n}|R_j^{\epsilon}f(\xi, \eta) - R_jf(\xi, \eta)|^2 d\xi\, d\eta = 
\int_{\R} \left(\int_{\R^n}|R_j^{\epsilon}(\lambda)f^{\lambda}(\xi) - R_j(\lambda)f^{\lambda}(\xi)|^2
d\xi \right)d\lambda .
\]
The proposition follows once we show that for every $\lambda \in \R^*$
$$\int_{\R^n}|R_j^{\epsilon}(\lambda)f^{\lambda}(\xi) - R_j(\lambda)f^{\lambda}(\xi)|^2 d\xi
\rightarrow 0 \text{ as } \epsilon \rightarrow 0$$  and
$$\int_{\R^n}|R_j^{\epsilon}(\lambda)f^{\lambda}(\xi) - R_j(\lambda)f^{\lambda}(\xi)|^2 d\xi \leq 4
\int_{\R^{n}} |f^{\lambda}(\xi)|^2 d\xi. $$
To see these, we expand $f^{\lambda}$ in terms of scaled Hermite functions
$\Phi_{\alpha}^{\lambda}$ (see \cite{MR2008480} for definition) and use the fact that $$ A_j(\lambda) \Phi_{\alpha}^{\lambda} =
(2\alpha_j + 2) ^{1/2} |\lambda|^{1/2} \Phi_{\alpha+e_j}^{\lambda}.$$
where $e_j$ is the canonical unit vector of $\R^n$ with $1$ in the $j$\textsuperscript{th} entry and zero elsewhere. Then \[R_j^{\epsilon}(\lambda)f^{\lambda}(\xi) - R_j(\lambda)f^{\lambda}(\xi)=
   \sum_{\alpha \in \N^{n}}\frac{(2\alpha_j + 2) ^{1/2} |\lambda|^{1/2}}{\sqrt{\pi}}\left(\int_{A_\epsilon}e^{-(2|\alpha| + n)|\lambda| r
}r^{-1/2}dr  \right)(f^{\lambda},
\Phi_{\alpha}^{\lambda})  \Phi_{\alpha+e_j}^{\lambda}.
\]
where $A_{\epsilon} = {(0,\epsilon^2)\cup(1/\epsilon^2, \infty ) }.$
From this, it follows that
\[
\|R_j^{\epsilon}(\lambda)f^{\lambda} - R_j(\lambda)f^{\lambda}\|_2^2  =
\frac{1}{\pi}\sum_{\alpha \in \N^{n}} \frac{2\alpha_j + 2 }{2|\alpha| + n}|(f^{\lambda},
\Phi_{\alpha}^{\lambda}) |^2  \left(\int_{A_{\epsilon, \alpha} } e^{- r }r^{-1/2}dr\right)^2.
\]
where $A_{\epsilon, \alpha} = {(0,(2|\alpha| + n)|\lambda|\epsilon^2)\cup((2|\alpha| +
n)|\lambda|\epsilon^{-2}, \infty ) }.$ From the above equation it is clear that
$$\|R_j^{\epsilon}(\lambda)f^{\lambda} - R_j(\lambda)f^{\lambda}\|_2 \leq 2 \|f^{\lambda} \|_2$$
We also note that when $$ f^{\lambda}(\xi) = \sum_{(2|\alpha| + n)|\lambda| \leq N} (f^{\lambda},
\Phi_{\alpha}^{\lambda})\Phi_{\alpha}^{\lambda}(\xi)$$ is a finite linear combination of
$\Phi_{\alpha}^{\lambda},$
\[ \|R_j^{\epsilon}(\lambda)f^{\lambda} - R_j(\lambda)f^{\lambda}\|_2^2 \leq 
\frac{1}{\pi} \sum_{(2|\alpha| + n)|\lambda| \leq N} |(f^{\lambda},\Phi_{\alpha}^{\lambda})|^2
\left(\int_{(0,N\epsilon^2)\cup(N\epsilon^{-2}, \infty ) }\!\!\!\!\!\!\!\!\!\!\!\!\!\!\!\!\! e^{- r
}r^{-1/2}dr\right)^2 \]
which goes to 0   as $\epsilon \rightarrow 0.$  As such functions are dense in $L^2(\R^{n}),$ we
get $$\|R_j^{\epsilon}(\lambda)f^{\lambda} - R_j(\lambda)f^{\lambda}\|_2 \rightarrow 0, \text{ as }
\epsilon \rightarrow 0 $$ for any $ f \in L^2(\R^{n+1}).$
\end{proof}
For the individual Riesz transforms, we have the following result:
\begin{thm}\thlabel{singleriesz}For $j = 1, 2, 3, \cdots , n $ we have $$\|R_jf\|_p + \|R_j^*f\|_p \leq
C_p \|f\|_p, 1< p< \infty $$ for all $f \in L^p(\R^{n+1})$ where $C_p$ is independent of the
dimension.
\end{thm}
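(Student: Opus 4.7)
My plan is to bound the truncated Riesz transforms $R_j^{\epsilon}$ uniformly in both $\epsilon$ and the dimension $n$, and then invoke Proposition~\thref{trunc} together with the density of $L^p\cap L^2$ in $L^p$ to pass to the limit; the operator $R_j^{*\epsilon}$ is handled symmetrically. The first step is to insert Mehler's formula for the scaled Hermite semigroup into the definition of $R_j^{\epsilon}(\lambda)$. Because $H(\lambda)=\sum_{k=1}^n H_k(\lambda)$ with $H_k(\lambda)=-\partial_{\xi_k}^2+\lambda^2\xi_k^2$ pairwise commuting, the Mehler kernel factors as a product over coordinates. Since $A_j(\lambda)=-\partial_{\xi_j}+\lambda\xi_j$ touches only the $\xi_j$ variable, the operator $A_j(\lambda)e^{-rH(\lambda)}$ is the tensor product of a one-dimensional operator in $\xi_j$, whose kernel I can write out explicitly by differentiating the 1-D Mehler kernel (one finds, up to a sign depending on $\sign(\lambda)$, the factor $\frac{|\lambda|}{\sinh(2r|\lambda|)}(e^{2r|\lambda|}\xi_j-y_j)$), and the $(n-1)$-dimensional Hermite semigroup in the remaining variables.

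Next I would perform the change of variable $t=e^{-2r|\lambda|}$ in the integral $\int_{\epsilon^2}^{1/\epsilon^2}(\cdot)\,r^{-1/2}dr$. The hyperbolic functions reduce to rational expressions in $t$, the Jacobian converts $r^{-1/2}dr$ into a clean weight, and the $|\lambda|$-dependence can be arranged to appear only through the $|\lambda|$-dependent endpoints and a scaling of the spatial variables. The aim is to rewrite
$$R_j^{\epsilon}f(\xi,\eta)=\int_0^1\bigl(U_t\,V_t^{\epsilon}f\bigr)(\xi,\eta)\,\phi(t)\,dt$$
for a fixed integrable weight $\phi$ on $(0,1)$, where $V_t^{\epsilon}$ is a one-dimensional singular-integral operator acting in the pair $(\xi_j,\eta)$, while $U_t$ is a Grushin-type semigroup evolution in the $(n-1)$ orthogonal coordinates $(\xi_k)_{k\neq j}$. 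By positivity and Minkowski's inequality, $U_t$ is a contraction on $L^p(\R^{n+1})$, so by Fubini it suffices to prove $\|V_t^{\epsilon}\|_{L^p(\R^2)\to L^p(\R^2)}\le C_p$ independently of $t$ and $\epsilon$.

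The main obstacle is the third step: constructing the above representation rigorously and identifying $V_t^{\epsilon}$ sharply enough to invoke a known one-dimensional estimate. One has to keep careful track of how the substitution $t=e^{-2r|\lambda|}$ interacts with the partial Fourier transform in $\eta$, so that the one-dimensional operator genuinely decouples from the ambient dimension. I expect $V_t^{\epsilon}$ to be comparable---after a partial Fourier transform in $\eta$---to a truncated one-dimensional Hermite-Riesz or directional Hilbert transform, for which the required dimension-one $L^p$ bound is classical. Once this is in hand, Minkowski's integral inequality gives $\|R_j^{\epsilon}f\|_p\le\|\phi\|_{L^1}\,C_p\,\|f\|_p$ with constants free of $n$ and $\epsilon$, and Proposition~\thref{trunc} concludes the proof by letting $\epsilon\to 0$.
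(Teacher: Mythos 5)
Your overall architecture (uniform bounds on the truncated operators $R_j^{\epsilon}$, then \thref{trunc} plus density to pass to the limit) matches the paper's, and your computation of the one-dimensional Mehler factor $\frac{|\lambda|}{\sinh(2r|\lambda|)}(e^{2r|\lambda|}\xi_j-y_j)$ is correct. But the entire third step --- which you yourself flag as the main obstacle --- is a genuine gap, not a technicality. First, the substitution $t=e^{-2r|\lambda|}$ is $\lambda$-dependent, and $\lambda$ is the Fourier variable dual to $\eta$: after undoing the partial Fourier transform you do not get a single operator $U_tV_t^{\epsilon}$ on $L^p(\R^{n+1})$ integrated against a fixed weight, but a $\lambda$-by-$\lambda$ family of multipliers, and proving that such a family assembles into a bounded operator on $L^p(\R^{n+1})$ is exactly the difficulty the whole problem poses. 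Second, $\int_0^\infty r^{-1/2}\,dr=\infty$ and $\|A_j(\lambda)e^{-rH(\lambda)}\|_{p\to p}\sim r^{-1/2}$ as $r\to0$; the weight your change of variables produces, essentially $(-\log t)^{-1/2}\,dt/t$, is not integrable near $t=0$. So an identity $R_j^{\epsilon}f=\int_0^1 U_tV_t^{\epsilon}f\,\phi(t)\,dt$ with $\phi\in L^1$ and $\|V_t^{\epsilon}\|_{p\to p}\le C_p$ can only hold if the full singular $r$-integration is packed \emph{inside} $V_t^{\epsilon}$; the resulting object is then not a classical one-dimensional Hermite--Riesz or directional Hilbert transform but (because the Grushin structure couples $\xi_j$ and $\eta$ through $\xi_j^2\partial_\eta^2$) a truncated Hilbert transform along a parabola, of the form $f\mapsto\int f\bigl(\xi+ry,\eta+rx\cdot\xi+r^2(\cdots)\bigr)\,dr/r$, whose uniform $L^p$ boundedness is the Stein--Wainger theorem on two-sided homogeneous curves --- a nontrivial input that your proposal neither identifies nor supplies.

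For contrast, the paper realizes your ``average of uniformly bounded operators against a fixed $L^1$ density'' scheme, but with the Heisenberg group $\H^{n}$ as the averaging space rather than the semigroup time: \thref{rep} gives $R_j^{\epsilon}f=\pi^{-1/2}\int_{\H^{n}}T_{\epsilon}^{(z,t)}f\,\tilde{Z}_jp_1(z,t)\,dz\,dt$, where $T_{\epsilon}^{(z,t)}$ is the Calder\'on transfer to $\R^{n+1}$ of the truncated Hilbert transform along $(rz,r^2t)$. Its uniform bound (\thref{T}) comes from transference plus Stein--Wainger, and the dimension-free $L^1$ bound on $\tilde{Z}_jp_1$ comes from the convolution structure $p_r^n(z,t)=p_r^1(z_1,\cdot)*\cdots*p_r^1(z_n,\cdot)(t)$ of the heat kernel, which is the correct replacement for the tensorization you attempt at the level of the Mehler kernel. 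If you want to avoid the group entirely, the known alternative for the Hermite case is the $g$-function argument of Harboure et al., which is a genuinely different method from the Minkowski-type averaging you sketch. As written, your proposal is a plausible plan whose decisive step is missing.
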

In order to prove this result, we claim that it is enough to prove
$$\|R_j^{\epsilon}f\|_p + \|R_j^{*\epsilon}f\|_p \leq C_p \|f\|_p $$
for the truncated Riesz transforms.  A proof of this  will be given in section \ref{sec:mainproof}
where we will show that $\left(R_j^{\epsilon}f \right)$ is Cauchy in $L^p(\R^{n+1})$ and hence
there exists an operator $S_j$, bounded on $L^p(\R^{n+1})$ such that $R_j^{\epsilon}f \rightarrow
S_jf$ as $\epsilon  \rightarrow 0.$  In view of \thref{trunc}, $R_j^{\epsilon}f \rightarrow R_jf$ for $f \in L^2(\R^{n+1})$ and hence $S_jf = R_jf$.  This will prove
the stated boundedness of $R_j$ ( and $R_j^*$). Now to prove the boundedness of $R_j^{\epsilon}$ and $R_j^{* \epsilon}$ we express these operators  as a superposition of certain
truncated Hilbert transforms.
\section{A representation for the truncated Riesz transforms }
The representation we obtain is very similar to the representation obtained in \cite{MR1391221}
for the Riesz transforms on the Heisenberg group $\H^{n}$.  Before stating this result, we recall
some definitions and notation.  For further details and proofs we refer to \cite{MR2008480}.

Recall that as a manifold $\H^n =\C^n \times \R$ and hence we write $(z,t), z = x+iy \in \C^n, t\in \R$ to denote
the elements of $\H^n$. The sub-Laplacian $\mathcal{L}$ on the Heisenberg group $\H^{n}$ can be written as the sum of the differential operators $ X_j={\left(\frac{\partial}{\partial x_j}+\frac{1}{2}y_j
\frac{\partial}{\partial t}\right)}$ and  $Y_j={\left(\frac{\partial}{\partial y_j}
-\frac{1}{2}x_j \frac{\partial}{\partial t} \right)}$ as $ \mathcal{L}=\sum^n_{j=1}(X_{j}^{2}+Y_{j}^{2}).$
The  sub-Laplacian is homogeneous of degree 2 with respect to the non-isotropic dilations
$\delta_r (z,t) =(rz, r^2t)$  of the Heisenberg group. Hence $p_s(z, t)$, the heat kernel associated to $\mathcal{L}$
satisfies $$p_{r^2s}(z, t) = r^{-(2n+2)}p_s(z/r, t/r^2).$$  This also follows from the following
explicit expression for $q_s(z,\lambda)$, the inverse Fourier transform  
of $p_s(z, t)$ in the $t$ variable,
$$q_s(z,\lambda)= \int_{\R} p_s(z, t)e^{i\lambda t}dt = (4 \pi)^{-n}\left(\frac{\lambda}{\sinh\lambda
s}\right)^{n}e^{-\frac{1}{4} \lambda \coth(s \lambda) |z|^2}.$$
By $\pi_{\lambda}$, we denote the Schr\"{o}dinger representation of the
Heisenberg group $\H^{n}$ acting on $L^2(\R^n)$ in the following manner
$$\pi_{\lambda}(x+iy, t)\phi(\xi) = e^{i\lambda t} e^{i\lambda (x.\xi+\frac{1}{2}x.y)}\phi(\xi + y).$$
When $t=0$, we will denote $\pi_{\lambda}(z, t)$ by $\pi_{\lambda}(z).$  $\pi_{\lambda}$ also
defines a representation of the group algebra $L^1(\H^{n})$ as $$ \pi_{\lambda}(f) =
\int_{\H^{n}}f(z,t)\pi_{\lambda}(z, t)dz\, dt = W_{\lambda}(f^{\lambda}) $$ where $W_{\lambda}(g) =
\int_{\C^n}g(z)\pi_{\lambda}(z)dz$ is the Weyl transform of $g$.
The representation given in \cite{MR1391221} for the Riesz transforms 
associated to the sub-Laplacian   $\mathcal{L}$ is
$$(X_i \mathcal{L}^{-\frac{1}{2}} f)(z,t) = -\frac{1}{4(2\pi)^{n+1}}\int_{\H^{n}}X_i p_1(w, s)H_{(w, s)}
f(z,t) dw \,ds$$
where $H_{(w, s)}$ is the Hilbert transform along a curve in the Heisenberg group.
For the Grushin operator, we obtain a similar representation involving certain operators
$T_{\epsilon}^{(z,t)}$ and certain differential operators $$\tilde{Z}_j = i\tilde{X}_j+\tilde{Y}_j; \,
\tilde{Z}^{*}_j = i\tilde{X}_j-\tilde{Y}_j$$ with $$ \tilde{X}_j= \left(\frac{\partial}{\partial x_j} -
\frac{y_j}{2}\frac{\partial}{\partial t}\right); \tilde{Y}_j= \left(\frac{\partial}{\partial y_j} +
\frac{x_j}{2}\frac{\partial}{\partial t}\right).$$
\begin{prop}\thlabel{rep} $$R_j^{\epsilon}f(\xi,\eta) =
\frac{1}{\sqrt{\pi}}\int_{\H^{n}}T_{\epsilon}^{(z,t)}f(\xi,\eta) \tilde{Z}_j p_1(z,t)dz\, dt$$
and $$R_j^{*\epsilon} f(\xi,\eta) = \frac{1}{\sqrt{\pi}}\int_{\H^{n}}T_{\epsilon}^{(z,t)}f(\xi,\eta)
\tilde{Z}^*_{j}p_1(z,t)dz \,dt$$
where
$$ T_{\epsilon}^{(z,t)}f(\xi,\eta) =  \int_{\epsilon < |r| < 1/\epsilon} f\left(\xi+ry,\eta+rx \cdot \xi+ r^2
\left(t+ \frac{x \cdot y}{2}\right)\right) \frac{dr}{r}$$ \end{prop}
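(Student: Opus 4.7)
The plan is to unroll $R_j^\epsilon(\lambda)$ via the subordination formula
$$R_j^\epsilon(\lambda)=\frac{1}{\sqrt\pi}\int_{\epsilon^2}^{1/\epsilon^2}r^{-1/2}\,A_j(\lambda)e^{-rH(\lambda)}\,dr,$$
realise each Hermite semigroup factor through the Heisenberg heat kernel, push $A_j(\lambda)$ onto an $\H^n$-vector field, and finally rescale. For the first step I use the standard identification $e^{-rH(\lambda)}=W_\lambda(q_r(\cdot,\lambda))=\int_{\H^n}p_r(z,t)\pi_\lambda(z,t)\,dz\,dt$, which reduces the proof to a computation in the Schr\"odinger picture.

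The algebraic heart of the argument is the intertwining
$$A_j(\lambda)\pi_\lambda(z,t)=-\tilde Z_j\pi_\lambda(z,t),$$
in which $A_j(\lambda)$ acts on the $\xi$-variable and $\tilde Z_j=i\tilde X_j+\tilde Y_j$ acts on the $\H^n$-variable $(z,t)$. This is a direct check: differentiating $\pi_\lambda(x+iy,t)\phi(\xi)=e^{i\lambda t}e^{i\lambda(x\cdot\xi+\frac{1}{2}x\cdot y)}\phi(\xi+y)$ in $\xi_j$ and in $(x_j,y_j,t)$ produces the same scalar combinations with opposite signs. Substituting this identity into the integral representation of $e^{-rH(\lambda)}$ and integrating by parts in $(z,t)$, using that $\tilde X_j$ and $\tilde Y_j$ are divergence-free first-order vector fields on $\H^n$, yields
$$A_j(\lambda)e^{-rH(\lambda)}=\int_{\H^n}(\tilde Z_j p_r)(z,t)\,\pi_\lambda(z,t)\,dz\,dt.$$

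I then plug this into $R_j^\epsilon f(\xi,\eta)=\frac{1}{2\pi}\int e^{-i\lambda\eta}R_j^\epsilon(\lambda)f^\lambda(\xi)\,d\lambda$, interchange the orders of integration, and collapse the inner $\lambda$-integral. Since $\pi_\lambda(z,t)f^\lambda(\xi)$ equals $e^{i\lambda t}e^{i\lambda(x\cdot\xi+\frac{1}{2}x\cdot y)}f^\lambda(\xi+y)$, Fourier inversion in $\lambda$ turns this quantity into a Heisenberg-type translate of $f$ at the point $\bigl(\xi+y,\,\eta\mp(t+x\cdot\xi+\tfrac12 x\cdot y)\bigr)$. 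I then rescale via $r=s^2$, $z\mapsto sz$, $t\mapsto s^2 t$, exploiting the parabolic scaling $p_{s^2}(sz,s^2 t)=s^{-(2n+2)}p_1(z,t)$ together with the fact that $\tilde Z_j$ has homogeneity $-1$ with respect to the dilations $\delta_s$, so $(\tilde Z_j p_{s^2})(sz,s^2 t)=s^{-(2n+3)}(\tilde Z_j p_1)(z,t)$. The Jacobian $s^{2n+2}$ combined with $r^{-1/2}dr=2\,ds$ telescopes to $\frac{ds}{s}$, leaving an integral over positive $s$ against $(\tilde Z_j p_1)(z,t)$. Finally, the $z$-parity of $p_1$ gives $(\tilde Z_j p_1)(-z,t)=-(\tilde Z_j p_1)(z,t)$, and the change of variables $z\mapsto -z$ symmetrises the positive-$s$ integral into the full truncated integral over $\epsilon<|r|<1/\epsilon$, delivering precisely $T_\epsilon^{(z,t)}f(\xi,\eta)$. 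The companion formula for $R_j^{*\epsilon}$ follows identically from $A_j(\lambda)^*\pi_\lambda=-\tilde Z_j^*\pi_\lambda$.

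The main obstacle is the last step. The intertwining, the integration by parts, and the Fourier inversion are all routine, but one has to track signs carefully through the parabolic rescaling and the subsequent symmetrisation via the oddness of $\tilde Z_j p_1$ so that the composed action on $f$, including the sign of the $\eta$-shift and the factor $r^2(t+\tfrac12 x\cdot y)$, matches exactly the Heisenberg translation appearing inside the definition of $T_\epsilon^{(z,t)}$.
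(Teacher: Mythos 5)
Your proposal is correct and follows essentially the same route as the paper: subordination of $H(\lambda)^{-1/2}$, realisation of $e^{-rH(\lambda)}$ through the Heisenberg heat kernel, the intertwining of $A_j(\lambda)$ with $-\tilde Z_j$ under $\pi_\lambda$, integration by parts in $(z,t)$, parabolic rescaling, and Fourier inversion in $\lambda$. The only (cosmetic) difference is the order of operations: the paper rescales the heat kernel first, so its intertwining lemma is stated for $\pi_\lambda(rz)$ and already carries the factor $r$ that produces $\tfrac{dr}{r}$, whereas you use the unscaled intertwining and recover $\tfrac{dr}{r}$ at the end from the homogeneity of $\tilde Z_j p_{s^2}$ together with the oddness of $\tilde Z_j p_1$ in $z$.
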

We follow the method in \citeauthor{MR2218202} in proving this proposition.  First we prove the
following lemma which will be used in the proof of \thref{rep}.
\begin{lem} For any $\phi \in \mathcal{S}(\R^n)$,
\begin{itemize}
\item[($i$)] $ r \frac{\partial}{\partial \xi_j}\left( \pi_{\lambda}(rz)\phi(\xi)\right) = \left(\frac{\partial}{\partial y_j} + \frac{i
\lambda r^{2}x_{j}}{2}\right) \left(\pi_{\lambda}(rz) \phi(\xi)\right)$
\item[($ii$)]$ ir \lambda \xi_{j} \pi_{\lambda}(rz)\phi(\xi) = \left(\frac{\partial}{\partial x_j} - \frac{i \lambda
r^{2}y_{j}}{2}\right) \pi_{\lambda}(rz)\phi(\xi). $
\end{itemize}
\end{lem}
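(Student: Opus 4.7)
The plan is a direct computation from the explicit form of the Schr\"odinger representation. Setting $t=0$ and $rz = rx + iry$ in the definition gives
\[
\pi_{\lambda}(rz)\phi(\xi) = e^{i\lambda r (x \cdot \xi) + \frac{1}{2} i \lambda r^{2} (x \cdot y)} \phi(\xi + ry),
\]
and both identities follow by applying the chain and product rules to this explicit expression, checking that the "magnetic" multiplication operators $\tfrac{i\lambda r^{2} x_{j}}{2}$ and $-\tfrac{i\lambda r^{2} y_{j}}{2}$ on the right-hand sides exactly absorb the phase contributions coming from the cross-term $\tfrac{1}{2}(x\cdot y)$ in the exponential.

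For (i), I would differentiate the explicit expression in $\xi_{j}$ and multiply by $r$; this produces $i\lambda r^{2} x_{j}\,\pi_{\lambda}(rz)\phi(\xi)$ from the linear phase $\lambda r(x\cdot\xi)$, together with $r\,e^{\cdots}(\partial_{j}\phi)(\xi+ry)$ from differentiating the translate of $\phi$. Differentiating instead in $y_{j}$ gives the same translation term $r\,e^{\cdots}(\partial_{j}\phi)(\xi+ry)$ but only half of the first contribution, namely $\tfrac{1}{2}i\lambda r^{2} x_{j}\,\pi_{\lambda}(rz)\phi(\xi)$, arising from the cross-term in the exponential. Adding the multiplication operator $\tfrac{i\lambda r^{2}x_{j}}{2}$ precisely supplies the missing half, and (i) follows.

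For (ii), I would compute similarly: multiplying $\pi_{\lambda}(rz)\phi$ by $i r \lambda \xi_{j}$ reproduces exactly the contribution of the $x_{j}$-derivative on the linear-phase factor $e^{i\lambda r (x\cdot\xi)}$, while the extra $\tfrac{1}{2} i \lambda r^{2} y_{j}\,\pi_{\lambda}(rz)\phi(\xi)$ produced when $\partial_{x_{j}}$ hits the cross-phase $\tfrac{1}{2} i \lambda r^{2}(x\cdot y)$ is cancelled by subtracting $\tfrac{i\lambda r^{2} y_{j}}{2}$. Note that $\phi(\xi+ry)$ is annihilated by $\partial_{x_{j}}$, so no further terms appear.

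There is no genuine obstacle here; the lemma is a bookkeeping identity that records, after the dilation $z\mapsto rz$, the Heisenberg commutation relations encoded in the intertwining between $\pi_{\lambda}$ and its infinitesimal generators. The only care needed is to track the correct powers of $r$ coming simultaneously from the rescaling $z\mapsto rz$ and from the chain rule applied to $\phi(\xi+ry)$; this is precisely what forces the $r^{2}$ (rather than $r$) in the magnetic terms on the right-hand sides.
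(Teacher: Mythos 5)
Your proposal is correct and follows essentially the same route as the paper: a direct computation with the explicit formula $\pi_{\lambda}(rz)\phi(\xi) = e^{i\lambda (rx \cdot \xi + \frac{1}{2} r^{2} x \cdot y)} \phi(\xi + ry)$, comparing the $\xi_j$-, $y_j$- and $x_j$-derivatives term by term. The paper merely packages the same bookkeeping slightly differently (via the commutator of $\pi_{\lambda}(rz)$ with $\partial/\partial\xi_j$), so there is nothing substantive to distinguish the two arguments.
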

\begin{proof}We first look at the derivative of $\pi_{\lambda}(rz)$ with respect to the variable $y_j$  and see that
\begin{align*}
\frac{\partial}{\partial y_j} \big(\pi_{\lambda}(rz)\phi(\xi)\big) 
&=  \frac{\partial}{\partial y_j} \left(e^{i \lambda (rx
\cdot \xi + \frac{1}{2}r^2 x \cdot y)} \phi(\xi + ry)\right)\\
&=\frac{ i \lambda r^2 x_j}{2} \pi_{\lambda}(rz) \phi(\xi ) +  e^{i \lambda (rx
\cdot \xi + \frac{1}{2}r^2 x \cdot y)} \frac{\partial}{\partial y_j}\phi(\xi + ry)\\
&= \frac{ i \lambda r^2 x_j}{2} \pi_{\lambda}(rz) \phi(\xi )+ r \pi_{\lambda}(rz) \frac{\partial
\phi}{\partial \xi_j} (\xi). \end{align*}
Writing $$
\pi_{\lambda}(rz) \frac{\partial \phi}{\partial \xi_j} (\xi) = \left(\pi_{\lambda}(rz) \frac{\partial }{\partial
\xi_j} - \frac{\partial }{\partial \xi_j}\pi_{\lambda}(rz)\right)\phi(\xi ) + \frac{\partial }{\partial
\xi_j}\pi_{\lambda}(rz)\phi(\xi ) $$
 and noting that $$
\frac{\partial }{\partial \xi_j}\big(\pi_{\lambda}(rz)\phi(\xi)\big) =
 i \lambda r x_j \pi_{\lambda}(rz) \phi(\xi ) + \pi_{\lambda}(rz) \frac{\partial \phi}{\partial \xi_j} (\xi)
$$
we see that $$
\frac{\partial}{\partial y_j}\big( \pi_{\lambda}(rz) \phi(\xi)\big)  = 
\left( r \frac{\partial}{\partial \xi_j} - \frac{ i
\lambda r^2 x_j}{2} \right) \big(\pi_{\lambda}(rz)  \phi(\xi )\big) $$
which proves part (i). Part (ii) of the lemma follows from
$$ \frac{\partial}{\partial x_j} \big(\pi_{\lambda}(rz)\phi(\xi)\big) 
= i \lambda r \xi_j \pi_{\lambda}(rz) \phi(\xi )
+ \frac{ i \lambda r^2 y_j}{2}\pi_{\lambda}(rz) \phi(\xi )$$
\end{proof}
\begin{proof}[Proof of \thref{rep}]
The heat kernels for the sub-Laplacian and the Hermite operator are 
related via the group Fourier
transform on the Heisenberg group as follows:  $$  \hat{p_{s}}(\lambda) =
\int_{\H^{n}}p_{s}(z,t)\pi_{\lambda}(z,t)dz\,dt=e^{-s H(\lambda)}$$ 
We refer to \cite{MR2008480} for a proof of this.
Using the homogeneity of the heat kernel we get
\begin{eqnarray*}e^{-r^{2}H(\lambda)} &=& r^{-(2n+2)} \int_{\H^{n}}  \pi_{\lambda}(z,
t)p_{1}\left(\frac{z}{r},\frac{t}{r^2}\right) dz \,dt  \\
&=& r^{-2n} \int_{\C^n} \pi_{\lambda}(z)q_{1}\left(\frac{z}{r},- \lambda r^{2}\right) dz
=\int_{\C^n}  \pi_{\lambda}(rz)q_{1}\left(z,- \lambda r^{2}\right) dz.
\end{eqnarray*}
Since $R_j^{\epsilon}(\lambda) = \frac{1}{\sqrt{\pi}}A_j(\lambda)\int_{\epsilon < |r| <
1/\epsilon}e^{-r^{2}H(\lambda)}dr,$
we see that
$$
R_j^{\epsilon}(\lambda) = \frac{1}{\sqrt{\pi}}\int_{\epsilon < |r| <
1/\epsilon}\int_{\C^n}\left(-\frac{\partial}{\partial \xi_j}+\lambda \xi_j
\right)\pi_{\lambda}(rz)q_{1}\left(z,- \lambda r^{2}\right) dz \,dr;$$
$$R_j^{*\epsilon}(\lambda) =  \frac{1}{\sqrt{\pi}}\int_{\epsilon < |r| <
1/\epsilon}\int_{\C^n}\left(\frac{\partial}{\partial \xi_j}+\lambda
\xi_j\right)\pi_{\lambda}(rz)q_{1}\left(z,- \lambda r^{2}\right) dz\, dr.
$$
Now using the previous lemma,
\begin{eqnarray*}
\lefteqn{ir  \int_{\C^n}  \lambda \xi_{j} \pi_{\lambda}(rz)q_{1}\left(\frac{z}{r},- \lambda r^{2}\right) dz
} \\
&=& \int_{\C^n} \left( \frac{\partial}{\partial x_j} - \frac{i \lambda r^{2}y_{j}}{2}  \right) \pi_{\lambda}(rz) q_{1}(z,- \lambda r^{2}) dz \\
&=& -\int_{\C^n}  \pi_{\lambda}(rz)\left(\frac{\partial}{\partial x_j} + \frac{i \lambda
r^{2}y_{j}}{2}\right)q_{1}(z,- \lambda r^{2}) dz \\
&=& -\int_{\C^n}  \pi_{\lambda}(rz)\left(\frac{\partial}{\partial x_j} + \frac{i \lambda
r^{2}y_{j}}{2}\right)\left(\int_{\R}p_1(z,t)e^{i\lambda r^{2}t} dt \right) dz \\
&=& -\int_{\C^n}  \pi_{\lambda}(rz)\left(\int_{\R}\left(\frac{\partial}{\partial x_j} - \frac{y_{j}}{2}
\frac{\partial}{\partial t}\right)p_1(z,t)e^{i\lambda r^{2}t} dt \right) dz \\
&=& -\int_{\H^{n}}  \pi_{\lambda}(rz)\tilde{X}_{j}p_1(z,t)e^{i\lambda r^{2}t}  dz\,dt. \\
\end{eqnarray*}
Similarly,
\begin{eqnarray*}
\lefteqn{r \int_{\C^n}  \left(\frac{\partial}{\partial \xi_j} 
\pi_{\lambda}(rz) \right) q_{1}\left(\frac{z}{r},- \lambda
r^{2}\right) dz } \\
&=& \int_{\C^n} \left( \frac{\partial}{\partial y_j} + \frac{i \lambda r^{2}x_{j}}{2}  \right)\pi_{\lambda}(rz) q_{1}(z,- \lambda r^{2}) dz \\
&=& -\int_{\H^{n}}  \pi_{\lambda}(rz)\tilde{Y}_{j}p_1(z,t)e^{i\lambda r^{2}t}   dz\,dt. \\
\end{eqnarray*} Hence $$ R_j^{\epsilon}(\lambda) = \frac{1}{\sqrt{\pi}}\int_{\epsilon < |r| <
1/\epsilon}\int_{\H^{n}}\pi_{\lambda}(rz)\tilde{Z}_{j}p_1(z,t)e^{i\lambda r^{2}t}  dz\, dt\frac{dr}{r}$$ and
$$R_j^{*\epsilon}(\lambda) = \frac{1}{\sqrt{\pi}}\int_{\epsilon < |r| <
1/\epsilon}\int_{\H^{n}}\pi_{\lambda}(rz)\tilde{Z}^{*}_{j}p_1(z,t)e^{i\lambda r^{2}t}  dz \,dt \frac{dr}{r} .$$
Thus $$R_j^{\epsilon} f(\xi, \eta )= \frac{1}{\sqrt{\pi}}\int_{\R}\left(\int_{\epsilon < r <
1/\epsilon}\int_{\H^{n}}\pi_{\lambda}(rz)f^{\lambda}(\xi) \tilde{Z}_{j}p_1(z,t)e^{i\lambda r^{2}t}  dz\,
dt\frac{dr}{r}\right) e^{i \lambda \eta} d \lambda .$$
Now the proposition follows from the fact that $$\int_{\R}\pi_{\lambda}(rz)f^{\lambda}(\xi)
e^{i\lambda r^{2}t}  e^{i \lambda \eta} d \lambda = f\left(\xi+ry,\eta+rx \cdot \xi+ r^2 \left(t+ \frac{x
\cdot y}{2}\right)\right). $$
\end{proof}
\section{A transference result}
In the previous section, we obtained a representation for the Riesz transforms as the
superposition of certain operators $T_{\epsilon}^{(z,t)}$.  To prove the boundedness of the vector
of Riesz transforms, using the method of rotations, we need to prove that these operators are
bounded uniformly in $(z,t).$
\begin{prop}\thlabel{T} For $1 < p < \infty, $ there exists a constant $C_p$ independent of
$(z,t)\in \H^{n}, \epsilon > 0$ and the dimension $n$ such that, $$ \left\| T_{\epsilon}^{(z,t)}f
\right\|_{L^{p}(\R^{n+1})} \leq C_p \left\| f \right\|_{L^{p}(\R^{n+1})}.$$
\end{prop}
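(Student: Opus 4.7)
The plan is to recognize $T_\epsilon^{(z,t)}$ as the transference of a truncated Hilbert transform along a parabola in $\R^2$, via an isometric action of a two-dimensional abelian subgroup of $\H^n$ on $\R^{n+1}$. Define $\rho$ on functions on $\R^{n+1}$ by
$$\rho(w,s)f(\xi,\eta) \;=\; f\bigl(\xi + y',\, \eta - s - x'\cdot\xi - \tfrac12 x'\cdot y'\bigr), \qquad w = x' + i y';$$
a direct check (using $\IM(w_1\bar w_2) = y_1'\cdot x_2' - x_1'\cdot y_2'$) shows that $\rho$ is a group homomorphism from $\H^n$ into the affine, Lebesgue-measure-preserving bijections of $\R^{n+1}$, so each $\rho(w,s)$ is an $L^p$-isometry. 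A Fourier-in-$\eta$ calculation parallel to the one in the proof of \thref{rep} identifies the integrand of $T_\epsilon^{(z,t)}f$ with $\rho(a_r)f(\xi,\eta)$, where $a_r := (r\zeta,\, -r^2 t)$ and $\zeta := -x + iy$; thus
$$T_\epsilon^{(z,t)}f \;=\; \int_{\epsilon<|r|<1/\epsilon} \rho(a_r)f\,\frac{dr}{r}.$$

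The decisive observation is that $r \mapsto a_r$ lies in a two-dimensional abelian subgroup of $\H^n$. Since $\IM(\zeta\bar\zeta) = 0$, the set $\Lambda := \{(s\zeta, u) : s, u \in \R\}$ is a subgroup isomorphic to $(\R^2,+)$, and in the natural coordinates $(s\zeta,u)\leftrightarrow(s,u)$ the curve $r \mapsto a_r$ becomes the parabola $r \mapsto (r, -r^2 t)$. Restricting $\rho$ to $\Lambda$ yields an isometric action of $\R^2$ on $L^p(\R^{n+1})$, so the Coifman--Weiss transference principle, applied to the singular Radon measure $\mu_\epsilon := \int_{\epsilon<|r|<1/\epsilon} \delta_{(r,-r^2 t)}\,dr/r$ on $\R^2$, yields
$$\bigl\|T_\epsilon^{(z,t)}\bigr\|_{L^p(\R^{n+1}) \to L^p(\R^{n+1})} \;\le\; \bigl\|g \mapsto g \ast \mu_\epsilon\bigr\|_{L^p(\R^2) \to L^p(\R^2)}.$$

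The right-hand side is the truncated Hilbert transform along the parabola $r \mapsto (r, -r^2 t)$ in $\R^2$, which by the classical theory of singular integrals along curves is bounded on $L^p(\R^2)$ for $1 < p < \infty$ with a constant depending only on $p$: independence from $t$ follows from the parabolic dilations $(u,v) \mapsto (\lambda u, \lambda^2 v)$, and uniformity in $\epsilon$ is standard. Since the $\R^2$ bound does not see the ambient dimension $n$, the dimension-free estimate for $T_\epsilon^{(z,t)}$ follows.

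The hardest step is the transference itself: applying Coifman--Weiss to a singular Radon measure rather than to an $L^1$ convolution kernel requires an approximation argument --- convolve $\mu_\epsilon$ with a smooth mollifier, transfer each regularisation, and pass to the limit using a uniform maximal bound. The degenerate case $\zeta = 0$ (that is, $z = 0$) falls outside the $\R^2$-reduction, but it is even easier: there $\rho(a_r)f$ is an even function of $r$ while $dr/r$ is odd, so the principal-value integral defining $T_\epsilon^{(z,t)}$ vanishes identically.
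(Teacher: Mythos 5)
Your proof is correct, but it reaches the conclusion by a genuinely different route from the paper. The paper uses Calder\'on's transference for the action $U$ of the \emph{full} Heisenberg group $\H^n$ on $\R^{n+1}$: it identifies $T_{\epsilon}^{(z,t)}$ as the transfer of the truncated Hilbert transform $H_{(z,t)}^{\epsilon}$ along $\delta_r(z,t)$ on $L^p(\H^n)$, runs the averaging argument over Koranyi balls $B_R(0)$ (letting $R\to\infty$ and using polynomial volume growth), and only then reduces $H_{(z,t)}^{\epsilon}$ to the parabola in $\R^2$ by a $U(n)$-rotation, an anisotropic dilation, and the shear $\tau_a$. You instead observe that the curve $r\mapsto a_r$ already lies in a two-dimensional \emph{abelian} subgroup $\Lambda\cong\R^2$ of $\H^n$ (since $\IM(\zeta\bar\zeta)=0$), restrict the action to $\Lambda$, and apply Coifman--Weiss transference directly to the finite measure $\mu_\epsilon$ on $\R^2$; your verification that $\rho$ restricted to $\Lambda$ is an isometric representation and that $T_{\epsilon}^{(z,t)}f=\int\rho(a_r)f\,dr/r$ checks out, as does the handling of the degenerate case $z=0$. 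Both arguments terminate at the same place --- Stein--Wainger's uniform bounds for truncated Hilbert transforms along the two-sided homogeneous parabola in $\R^2$, with $t$-independence from parabolic dilations --- so the dimension-free constant arises identically. What each buys: your route collapses the paper's two-step reduction into one and avoids the $R\to\infty$ averaging, at the cost of invoking the measure-valued version of Coifman--Weiss, which needs the mollification-and-limit argument you sketch (pointwise convergence on $C_c$ plus Fatou suffices there; no maximal bound is actually required). The paper's route is longer but makes the intermediate object $H_{(z,t)}^{\epsilon}$ on $L^p(\H^n)$ explicit, which it reuses in the proof of \thref{singleriesz} to show that $T_{\epsilon}^{(z,t)}f$ is Cauchy in $\epsilon$; if you adopt your argument you would need to extract the analogous Cauchy statement from the uniform $\R^2$ bounds, which Stein--Wainger also provides.
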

\begin{proof}
This  will be proved using Calder{\'o}n's method of transferring an operator on $L^p (X)$, for a
measure space $X$, to $L^p (G)$ when the group $G$ acts on $X$ by measure preserving
transformations.  The measure space $\R^{n+1}$, the group $\H^{n}$ and the group action
$$U(x+iy,t)(\xi, \eta) = (\xi -y,\eta-t+  \frac{x \cdot y}{2} - x . \xi) $$ are all the same as used
by Ratnakumar and Thangavelu in \cite{MR1612697} and we follow their notation.  Accordingly,
for $f \in L^p (\R^{n+1})$ and  $(\xi, \eta) \in \R^{n+1}$, we define the transferred function $F_{(\xi,
\eta)}$ on $\H^{n}$ as $$F_{(\xi, \eta)}(z,t) = f\left( U(z,t)(\xi, \eta)\right).$$  For $T \in
\mathcal{B}(L^p (\H^{n}))$, the transferred operator $T_0$ on $\mathcal{B}(L^p (\R^{n+1}))$ is
defined as $$T_0 f(\xi, \eta) = (T F_{(\xi, \eta)})(0).$$
For a curve $\gamma = \{ \gamma(t) \in \H^{n},t \in \R \}$ and a function $f$ on  $\H^{n}$, the Hilbert
transform of $f$ along $\gamma$ is defined as $$ H_{\gamma}f( w, s) = \int_{\R} f\left( ( w,
s)\gamma(r)^{-1} \right) \frac{dr}{r}. $$ When $\gamma$ is the curve $(rz, r^2 t)$, we will denote
$H_{\gamma}$ by $H_{(z,t)}$. By $H_{(z,t)}^{\epsilon}$ we denote the truncated Hilbert
transform where the integration is only over $\{r\in \R, \epsilon < |r| < 1/ \epsilon\}$.
When we transfer this operator $H_{(z,t)}^{\epsilon}$ to an operator on $L^p (\R^{n+1})$ under
the $U$ action defined above, we get
\begin{eqnarray*}
(H_{(z,t)}^{\epsilon})_0 f(\xi, \eta) &=&  H_{(z,t)}^{\epsilon}F_{(\xi, \eta)}(0) = \int_{\epsilon < |r| <
1/ \epsilon} F_{(\xi, \eta)}(-r z, -r^{2}t)\frac{dr}{r} \\
&=& \int_{\epsilon < |r| < 1/ \epsilon}\!\!\!\!f\left(\xi+ry,\eta+r(x \cdot \xi)+ r^2 \left(t+ \frac{x \cdot
y}{2}\right)\right) \frac{dr}{r}.\\
\end{eqnarray*}Hence we see that,
$(H_{(z,t)}^{\epsilon})_0 = T^{(z,t)}_{\epsilon}.$   Now our aim is to prove $$\|T^{(z,t)}_{\epsilon}f
\|_{L^p(\R^{n+1})} \leq C_p \|f \|_{L^p(\R^{n+1})} $$
This can be obtained using the transference method from the uniform boundedness of the
operator $H_{(z,t)}^{\epsilon}$.  To be precise, we use the fact that there exist a finite constant
$C_p$, independent of $(z,t) \in \H^{n}$, $n$ and $\epsilon > 0$ such that
$$\|H_{(z,t)}^{\epsilon}f\|_p \leq C_p \|f\|_p.$$  A proof of the above fact is indicated in
\cite{MR1391221}.  We will present it at the end of this section after proving the boundedness of
$T^{(z,t)}_{\epsilon}$.  Then
\begin{eqnarray*}
\int_{\R^{n+1}}\!\! |T^{(z,t)}_{\epsilon}f(\xi, \eta)|^p d\xi \,d\eta &=& \int_{\R^{n+1}}
|H_{(z,t)}^{\epsilon}F_{(\xi, \eta)}(0)|^p d\xi \,d\eta \\
&=& \int_{\R^{n+1}} |H_{(z,t)}^{\epsilon}F_{U(w, s)(\xi, \eta)}(0)|^p d\xi \,d\eta \\ \end{eqnarray*}
because of the invariance of the measure $d\xi \,d\eta$ under the $U$ action.  
Hence
\begin{align*} \int_{\R^{n+1}}\!\! |T^{(z,t)}_{\epsilon}f(\xi, \eta)|^p d\xi \,d\eta &=
 \frac{1}{|B_{R}(0)|} \int_{B_{R}(0)}\!\! \int_{\R^{n+1}}\!\!\!\!\!\! |H_{(z,t)}^{\epsilon}F_{U(w, s)(\xi,
\eta)}(0)|^p d\xi\, d\eta\, dw \,ds \\
&= \frac{1}{|B_{R}(0)|} \int_{B_{R}(0)}\!\! \int_{\R^{n+1}}\!\!\!\!\!\! |H_{(z,t)}^{\epsilon}F_{(\xi,
\eta)}(w, s)|^p d\xi \,d\eta\, dw \,ds \end{align*}
where $B_{R}(0)$ is the ball centred at origin in the Heisenberg group and radius $R$ under the
Koranyi norm $\|(w,s)\|= \left(|w|^4+|s|^2\right)^{1/4} .$
When  $(w, s) \in B_{R}(0),$ and $\epsilon <|r| < 1/ \epsilon,  (w, s) \left(\delta_{r}(z,t)\right)^{-1}
\in  B_{R+ \frac{h}{ \epsilon}}(0) $ where $h$ is the Koranyi norm of $(z, t).$ Hence, in the above
equality  $F_{(\xi, \eta)}$ can be replaced by $\tilde{F}_{(\xi, \eta)} = F_{(\xi, \eta)} \cdot
\chi_{B_{R+ h/ \epsilon}}(0)$. Now by an application of Fubini, we get
\begin{eqnarray*}\|T^{(z,t)}_{\epsilon}f \|_{p}^p &\leq& \frac{1}{|B_{R}(0)|} \int_{\R^{n+1}}\!
\int_{\H^{n}} \!\!\!\!\!\! |H_{(z,t)}^{\epsilon}\tilde{F}_{(\xi, \eta)}(w, s)|^p dw \,ds\, d\xi \,d\eta
\end{eqnarray*}
Now, from the uniform boundedness of the truncated Hilbert transforms, we get,
\begin{eqnarray*}
\|T^{(z,t)}_{\epsilon}f \|_{p}^p &\leq&  C_p \frac{1}{|B_{R}(0) |}  \int_{\R^{n+1}}\int_{\H^{n}}
|\tilde{F}_{(\xi, \eta)}(w, s)|^p  dw\,ds \,d\xi \,d\eta \\
 &=& C_p \frac{1}{|B_{R}(0)|} \int_{B_{R+h/\epsilon}(0)}  \int_{\R^{n+1}} 
|f(U(w, s)(\xi,\eta))|^p   d\xi d\eta dw ds \\
 &=& C_p \frac{|B_{R+h/\epsilon}(0)|}{|B_{R}(0)|}\|f\|_p^p = C_p \left( \frac{R+ h/
\epsilon}{R}\right)^{2n+2}\|f\|_p^p \\
\end{eqnarray*} again by the invariance $d\xi d\eta$ under the $U$ action.
Letting $ R \rightarrow \infty$, we see that $$\|T^{(z,t)}_{\epsilon}f \|_{L^p(\R^{n})} \leq C_p \|f
\|_{L^p(\R^{n})} $$
Coming back to the boundedness of $H_{(z,t)}^{\epsilon}$, we can use a technique in Lemma
3.1 of \cite{MR1101262}, to reduce this  to the boundedness of $H_{\gamma}^{\epsilon}$ on
$L^p(\R^2)$ for the curve $\gamma = \{(t, t^2), t \in \R\}$. Recall that
$$H_{(z,t)}f(w, s) = \int_{\R} f( w-rz, s- r^2 t - r\IM{(w.\bar{z})}) \frac{dr}{r}.$$  For $\sigma \in U(n),$
define $ \rho(\sigma)f(w, s) = f(\sigma w, s).$  Then
\begin{eqnarray*}
H_{(z,t)}(\rho(\sigma)f)(w, s) &=& \int_{\R} f( \sigma w-r\sigma z, s- r^2 t - r\IM{(w.\bar{z})})
\frac{dr}{r}.\\
\rho(\sigma^{-1})H_{(z,t)}(\rho(\sigma)f)(w, s) &=& H_{(z,t)}(\rho(\sigma)f)(\sigma^{-1} w, s)\\
&=& \int_{\R} f(  w-r\sigma z, s- r^2 t - r\IM{(\sigma^{-1}w.\bar{z})}) \frac{dr}{r}.\\
&=& \int_{\R} f(  w-r\sigma z, s-  t - r\IM{(w.\bar{\sigma z})}) \frac{dr}{r}.\\
\end{eqnarray*}
Since $\|\rho(\sigma)f\|_p = \|f\|_p$ and since there exists $\sigma \in U(n)$ such that
$\rho(\sigma)(x+iy)=x_1e_1 $, it is enough to consider the operator  $$ f \rightarrow \int_{\R} f( u_1 -rx_1,w', s-
r^2 t - rv_1 x_1) \frac{dr}{r}$$
which is equivalent to the  operator
$$ T_{(x, t, v)}f(u,s) =\int_{\R} f( u -rx,s- r^2 t - rv x) \frac{dr}{r}$$
acting on functions defined on $\R^2.$
For $\lambda_1, \lambda_2 > 0, $ let $$\delta_{\lambda_1, \lambda_2}f(u , s ) = f (\lambda_1 u,
\lambda_2 s). $$ Then $ \|\delta_{\lambda_1, \lambda_2}f\|_p = (\lambda_1  \lambda_2)^{-1/p}\|f\|_p
$ and
$$\delta_{1/\lambda_1, 1/\lambda_2}T_{(x, t, v)}\delta_{\lambda_1, \lambda_2}f(u,s) = \int_{\R} f(
u -r \lambda_1 x,s- r^2 \lambda_2 t - r\lambda_2 v x) \frac{dr}{r}$$
Since we can choose $\lambda_1, \lambda_2$ such that $\lambda_1 x = \lambda_2 t = 1 $  it is
enough to get uniform estimates for operators of the form $T_{(1, 1, a)}.$ That would imply
that  \\
\begin{align*} \|T_{(x, t, v)}f \|_p &= \|\delta_{\lambda_1, \lambda_2} T_{(1, 1,
vx/t)}\delta_{1/\lambda_1, 1/\lambda_2}f \|_p  \\
&=(\lambda_1  \lambda_2)^{-1/p}\|T_{(1, 1, vx/t)}\delta_{1/\lambda_1, 1/\lambda_2}f\|_p \\ &\leq
(\lambda_1  \lambda_2)^{-1/p}C_p \|\delta_{1/\lambda_1, 1/\lambda_2}f\|_p \\
&= (\lambda_1  \lambda_2)^{-1/p}C_p (\lambda_1  \lambda_2)^{1/p}\|f\|_p .\end{align*}
Now consider $ \tau_{a} f(u, s) = f ( u , s+au)$  so that $ \|\tau_{a} f\|_p = \|f\|_p. $
Since \begin{align*}
T_{(1, 1, a)} (\tau_{a}^{-1}f)(u, s) &= \int_{\R} (\tau_{a}^{-1}f)( u -r ,s- r^2  - ar) \frac{dr}{r} \\
&= \int_{\R} f( u -r ,s-au - r^2  ) \frac{dr}{r},
\intertext{we have}  \tau_{a} T_{(1, 1, a)} (\tau_{a}^{-1}f)(u, s) &= T_{(1, 1, a)} (\tau_{a}^{-1}f)(u,
s+au )\\
&=  \int_{\R} f( u -r ,s - r^2  ) \frac{dr}{r}.\\
\end{align*}
This is the Hilbert transform along the parabola $\gamma (r) = (r, r^2)$ in $\R^2$.  Since
\[
 \gamma (r) =
  \begin{cases}
   \delta_r (1, 1) & \text{if } r > 0 \\
   0 & \text{if }r = 0 \\
   \delta_{-r} (-1, 1)       & \text{if } r < 0
  \end{cases}
\]
and the linear space spanned by $\{\gamma (r)\}_{ r > 0}$  and  the linear space spanned by
$\{\gamma (r)\}_{ r < 0}$ are the same, namely $\R^2$, $\gamma (r)$ is a two-sided
homogeneous curve (see Def 3.1 in \cite{MR508453}*{p. 1261}).
Now we can appeal to Theorem 11  of \cite{MR508453}*{p. 1271} to get the uniform
boundedness of the  truncated Hilbert transforms  $H_{(z,t)}^{\epsilon}$ on $L^p(\H^{n})$.  That is,
there exist a finite constant $C_p$, independent of $(z,t) \in \H^{n}$, $n$ and $\epsilon > 0$ such
that $$\|H_{(z,t)}^{\epsilon}f\|_p \leq C_p \|f\|_p.$$
\end{proof}
\section{Proof of the main theorem}\label{sec:mainproof}
We first prove \thref{singleriesz} regarding the boundedness of the individual Riesz transforms
$R_j$ and $R_j^*.$
\begin{proof}[Proof of \thref{singleriesz}]
We first consider the operators $R_j^{\epsilon}$ and $R_j^{*\epsilon}.$  In the light of  \thref{rep}
and \thref{T}, we just need to prove that there exist a finite $C$, independent of $n$ such that
$$\|\tilde{Z}_j p_1(z,t)\|_{L^{1}(\H^{n})}, \|\tilde{Z}_j^* p_1(z,t)\|_{L^{1}(\H^{n})} \leq C.$$
As mentioned in Lemma 3 of \cite{MR1391221}, this follows from the fact that
$$p_r^n(z_1, z_2, \cdots, z_n, t) = p_r^1(z_1, \cdot)*p_r^1(z_2, \cdot),* \cdots,* p_r^1(z_n,
\cdot)(t)$$ where $p_r^n$ is the heat kernel on $\H^{n}.$ Now for the operators $R_j$ and
$R_j^{*},$ we need only to show that $\left(R_j^{\epsilon}f \right),$ $\left(R_j^{*\epsilon}f \right)$
are Cauchy in $L^p(\R^{n+1}).$
We have already seen that $\|R_j^{\epsilon}f\|_p \leq C \| T_{\epsilon}^{(z,t)}f \|_p $ and so it is
enough to prove that $T_{\epsilon}^{(z,t)}f$ is Cauchy in $L^p(\R^{n+1})$.  During the course of
the proof of \thref{T}, we had observed that the operator $T_{\epsilon}^{(z,t)}$ is obtained by
applying a transference on $H_{(z,t)}^{\epsilon}$ and that $$
\|T_{\epsilon}^{(z,t)}f\|_{L^p(\R^{n+1})} \leq \|H_{(z,t)}^{\epsilon}f\|_{L^p(\H^{n})}.$$
Being truncated Hilbert transforms $H_{(z,t)}^{\epsilon}f$ is Cauchy in $L^p(\H^{n})$ \cite{MR508453}*{Theorem 11}, and consequently, so is $T_{\epsilon}^{(z,t)}f$.
\end{proof}
Now we complete the proof of our main theorem.  As mentioned during the discussion of
\thref{singleriesz}, we only need to prove the boundedness of  the operator $
\mathcal{R}^{\epsilon}$ obtained by replacing $R_j$ and $R_j^*$ by their truncated versions.  We
will be closely following \cite{MR2091008} in proving this and so will skip some details. Using the
property
$$\frac{1}{r}\frac{\partial p_1}{\partial r} = \frac{1}{x_j}\frac{\partial p_1}{\partial x_j} =
\frac{1}{y_j}\frac{\partial p_1}{\partial y_j} $$ where $r = |z|$, we can rewrite
\begin{multline*} R_j^{\epsilon}f(\xi,\eta) = \frac{1}{\sqrt{\pi}}\int_{\H^{n}}(i x_j +
y_j)T_{\epsilon}^{(z,t)}f(\xi,\eta) \frac{1}{r}\frac{\partial p_1}{\partial r}(z,t)dz dt \\+ \frac{1}{2
\sqrt{\pi}}\int_{\H^{n}}( x_j - y_j)T_{\epsilon}^{(z,t)}f(\xi,\eta) \frac{\partial p_1}{\partial t}(z,t)dz dt
\end{multline*}
and  \begin{multline*}
R_j^{*\epsilon}f(\xi,\eta) = \frac{1}{\sqrt{\pi}}\int_{\H^{n}}(i x_j - y_j)T_{\epsilon}^{(z,t)}f(\xi,\eta)
\frac{1}{r}\frac{\partial p_1}{\partial r}(z,t)dz dt \\- \frac{1}{2 \sqrt{\pi}}\int_{\H^{n}}( x_j +
y_j)T_{\epsilon}^{(z,t)}f(\xi,\eta) \frac{\partial p_1}{\partial t}(z,t)dz dt
\end{multline*}
For a fixed $(\xi, \eta)$ we can choose $\lambda_1, \lambda_2,\ldots, \lambda_{2n}$ such that
${\sum_{j=1}^{2n} |\lambda_j| ^{2}=1}$ and
$$|\mathcal{R}^{\epsilon}f(\xi,\eta)| = \sum_{j=1}^{n} \left(\lambda_j 
\overline{R_j^{\epsilon}f(\xi,\eta)} +
\lambda_{j+n}\overline{R^{*\epsilon}_j f(\xi,\eta)}\right) $$
Now using the triangle inequality, H\"{o}lder's inequality and also Lemma 2(a) of
\cite{MR2091008}, we get
\begin{multline*}
|\mathcal{R}^{\epsilon}f(\xi,\eta)| \leq C \|T_{\epsilon}^{(z,t)}f(\xi,\eta)\|_{L^p(\frac{1}{r}\frac{\partial
p_1}{\partial r}dz dt)} \|x_1\|_{L^{p'}(\frac{1}{r}\frac{\partial p_1}{\partial r}dz dt)} \\
+ C \|T_{\epsilon}^{(z,t)}f(\xi,\eta)\|_{L^p(\frac{\partial p_1}{\partial t}dz dt)}
\|x_1\|_{L^{p'}(\frac{\partial p_1}{\partial t}dz dt)}
\end{multline*}
where $C$ is a universal constant.  Now the theorem follows from \thref{T} and an application of
Minkowski inequality  along with the fact (proved in \cite{MR2091008})that
$$\int_{\H^n}|x_1|^p \frac{1}{r}\frac{\partial p_1}{\partial r} dz\, dt, \int_{\H^n}|x_1|^p\frac{\partial p_1}{\partial t}dz \,dt \leq  A_p, \;p \geq 0, $$ where $A_p$ is independent of the dimension.

\begin{center}
{\bf Acknowledgments}

\end{center}
The authors are thankful to the referee for pointing out some errors and typos. The work of the first author is supported by the All India Council for Technical Education (AICTE).  The work of the second author is supported  by J. C. Bose Fellowship from
the Department of Science and Technology (DST) and also by a grant from UGC
via DSA-SAP.
\def\cprime{$'$}
\begin{bibdiv}
\begin{biblist}
\bib{Baudoin18012012}{article}{
      author={Baudoin, F.},
      author={Garofalo, N.},
       title={A note on the boundedness of {R}iesz transform for some
  subelliptic operators},
        date={2012},
     journal={International Mathematics Research Notices (to appear); Arxiv
  preprint arXiv:1105.0467},
  url={http://imrn.oxfordjournals.org/content/early/2012/01/18/imrn.rnr271.abstract},
}
\bib{MR1391221}{article}{
      author={Coulhon, T.},
      author={M{\"u}ller, D.},
      author={Zienkiewicz, J.},
       title={About {R}iesz transforms on the {H}eisenberg groups},
        date={1996},
        ISSN={0025-5831},
     journal={Math. Ann.},
      volume={305},
      number={2},
       pages={369\ndash 379},
         url={http://dx.doi.org/10.1007/BF01444227},
      review={\MR{1391221 (97f:22015)}},
}
\bib{MR0174937}{article}{
      author={Leeuw, Karel~de},
       title={On {$L_{p}$} multipliers},
        date={1965},
        ISSN={0003-486X},
     journal={Ann. of Math. (2)},
      volume={81},
       pages={364\ndash 379},
      review={\MR{0174937 (30 \#5127)}},
}
\bib{MR780616}{article}{
      author={Duoandikoetxea, J.},
      author={{Rubio~de Francia}, Jos{\'e}~L.},
       title={Estimations ind\'ependantes de la dimension pour les
  transform\'ees de {R}iesz},
        date={1985},
        ISSN={0249-6291},
     journal={C. R. Acad. Sci. Paris S\'er. I Math.},
      volume={300},
      number={7},
       pages={193\ndash 196},
      review={\MR{780616 (86e:42028)}},
}
\bib{MR2047645}{article}{
      author={Harboure, E.},
      author={Rosa, L.~de},
      author={Segovia, C.},
      author={Torrea, J.~L.},
       title={{$L^p$}-dimension free boundedness for {R}iesz transforms
  associated to {H}ermite functions},
        date={2004},
        ISSN={0025-5831},
     journal={Math. Ann.},
      volume={328},
      number={4},
       pages={653\ndash 682},
         url={http://dx.doi.org/10.1007/s00208-003-0501-2},
      review={\MR{2047645 (2006a:42017)}},
}
\bib{jotsaroop2011riesz}{article}{
      author={Jotsaroop, K.},
      author={Sanjay, P~K},
      author={Thangavelu, S.},
       title={{R}iesz transforms and multipliers for the {G}rushin operator},
        date={2011},
     journal={Journal d'Analyse Mathematique (to appear); Arxiv preprint
  arXiv:1105.3227},
}
\bib{MR2091008}{article}{
      author={Lust-Piquard, F.},
       title={Riesz transforms on generalized {H}eisenberg groups and {R}iesz
  transforms associated to the {CCR} heat flow},
        date={2004},
        ISSN={0214-1493},
     journal={Publ. Mat.},
      volume={48},
      number={2},
       pages={309\ndash 333},
         url={http://dx.doi.org/10.5565/PUBLMAT_48204_02},
      review={\MR{2091008 (2005g:43014)}},
}
\bib{MR2218202}{article}{
      author={Lust-Piquard, F.},
       title={Dimension free estimates for {R}iesz transforms associated to the
  harmonic oscillator on {$\Bbb R^n$}},
        date={2006},
        ISSN={0926-2601},
     journal={Potential Anal.},
      volume={24},
      number={1},
       pages={47\ndash 62},
         url={http://dx.doi.org/10.1007/s11118-005-4389-1},
      review={\MR{2218202 (2006k:42012)}},
}
\bib{MR960544}{incollection}{
      author={Pisier, G.},
       title={Riesz transforms: a simpler analytic proof of {P}.-{A}. {M}eyer's
  inequality},
        date={1988},
   booktitle={S\'eminaire de {P}robabilit\'es, {XXII}},
      series={Lecture Notes in Math.},
      volume={1321},
   publisher={Springer},
     address={Berlin},
       pages={485\ndash 501},
         url={http://dx.doi.org/10.1007/BFb0084154},
      review={\MR{960544 (89m:60178)}},
}
\bib{MR1612697}{article}{
      author={Ratnakumar, P.~K.},
      author={Thangavelu, S.},
       title={Spherical means, wave equations, and {H}ermite-{L}aguerre
  expansions},
        date={1998},
        ISSN={0022-1236},
     journal={J. Funct. Anal.},
      volume={154},
      number={2},
       pages={253\ndash 290},
         url={http://dx.doi.org/10.1006/jfan.1997.3135},
      review={\MR{1612697 (99h:33047)}},
}
\bib{MR699317}{article}{
      author={Stein, E.~M.},
       title={Some results in harmonic analysis in {${\bf R}^{n}$}, for
  {$n\rightarrow \infty $}},
        date={1983},
        ISSN={0273-0979},
     journal={Bull. Amer. Math. Soc. (N.S.)},
      volume={9},
      number={1},
       pages={71\ndash 73},
         url={http://dx.doi.org/10.1090/S0273-0979-1983-15157-1},
      review={\MR{699317 (84g:42019)}},
}
\bib{MR1101262}{article}{
      author={Strichartz, R.~S.},
       title={{$L^p$} harmonic analysis and {R}adon transforms on the
  {H}eisenberg group},
        date={1991},
        ISSN={0022-1236},
     journal={J. Funct. Anal.},
      volume={96},
      number={2},
       pages={350\ndash 406},
         url={http://dx.doi.org/10.1016/0022-1236(91)90066-E},
      review={\MR{1101262 (92d:22015)}},
}
\bib{sanjay2011revisiting}{article}{
      author={Sanjay, P.~K},
      author={Thangavelu, S.},
       title={Revisiting {R}iesz transforms on {H}eisenberg groups},
        date={2012},
     journal={Revista Mathem\'atica Iberoamericana},
	  volume={28},
	  number={4},
    pages={1091\ndash 1108},
		eprint = {arXiv:1110.3236}
}
\bib{MR508453}{article}{
      author={Stein, E.~M.},
      author={Wainger, S.},
       title={Problems in harmonic analysis related to curvature},
        date={1978},
        ISSN={0002-9904},
     journal={Bull. Amer. Math. Soc.},
      volume={84},
      number={6},
       pages={1239\ndash 1295},
         url={http://dx.doi.org/10.1090/S0002-9904-1978-14554-6},
      review={\MR{508453 (80k:42023)}},
}
\bib{MR2008480}{book}{
      author={Thangavelu, S.},
       title={An introduction to the uncertainty principle},
      series={Progress in Mathematics},
   publisher={Birkh\"auser Boston Inc.},
     address={Boston, MA},
        date={2004},
      volume={217},
        ISBN={0-8176-4330-3},
        note={Hardy's theorem on Lie groups, With a foreword by Gerald B.
  Folland},
      review={\MR{2008480 (2004j:43007)}},
}
\bib{MR1825406}{article}{
      author={Weis, L.},
       title={Operator-valued {F}ourier multiplier theorems and maximal
  {$L_p$}-regularity},
        date={2001},
        ISSN={0025-5831},
     journal={Math. Ann.},
      volume={319},
      number={4},
       pages={735\ndash 758},
         url={http://dx.doi.org/10.1007/PL00004457},
      review={\MR{1825406 (2002c:42016)}},
}
\end{biblist}
\end{bibdiv}
\end{document}